\DeclareMathOperator*{\esssup}{ess\,sup}
\newtheorem{theorem}{Theorem}[section]
\newtheorem{lemma}[theorem]{Lemma}
\newtheorem{corollary}[theorem]{Corollary}
\newtheorem{proposition}[theorem]{Proposition}
\theoremstyle{definition}
\newtheorem{example}[theorem]{Example}
\theoremstyle{remark}
\newtheorem{remark}[theorem]{Remark}
\numberwithin{equation}{section}
\begin{document}

\title{Fractional Calculus of Variations: a novel way to look at it}

\author{Rui A. C. Ferreira}
\address{Grupo F\'isica-Matem\'atica, Faculdade de Ci\^encias, Universidade de Lisboa, Av. Prof. Gama Pinto, 2, 1649-003 Lisboa, Portugal and Departamento de Ci\^encias e Tecnologia, Universidade Aberta, Lisboa 1250-052, Portugal.}
\email{raferreira@fc.ul.pt}
\thanks{The author was supported by the ``Funda\c{c}\~ao para a Ci\^encia e a Tecnologia (FCT)" through the program ``Stimulus of Scientific Employment, Individual Support--2017 Call" with reference CEECIND/00640/2017.}


\subjclass[2000]{Primary 49K30, 26A33}



\keywords{Calculus of variations, fractional derivative, Euler--Lagrange equation.}

\begin{abstract}
In this work we look at the original fractional calculus of variations problem in a somewhat different way. As a simple consequence, we show that a fractional generalization of a classical problem has a solution without any restrictions on the derivative-order $\alpha$.  
\end{abstract}

\maketitle

\section{Introduction}

Fractional calculus and the variational calculus are subjects that have attracted many scientists over time. Historically, it seems to have been F. Riewe who first linked these two worlds in his work \cite{Riewe}. There, it was \emph{born} the \emph{fractional Euler--Lagrange equation}.

The classical calculus of variations (meaning Lagrangians depending on integer order derivatives) is a subject of immense research since the times of Euler. In particular, a correct proof of the (nowadays known by) Euler--Lagrange equation was given by Lagrange himself years after Euler derived it in a more or less heuristic way.

The modern approach to derive the classical Euler--Lagrange equation consists in essence on the use of the integration by parts formula and the du Bois-Reymmond lemma (or sometimes called \emph{fundamental lemma of the calculus of variations}) (see e.g. \cite{Brunt}). This being said, it sounds like a very simple exercise to do it. Well, the real scenario is not that simple if you want to construct a rigorous proof of it. Many questions arise that need justification in the process of deriving the formula. For example, under which space of functions can one use the integration by parts formula?

In the \emph{fractional scenario}, as reading the work of F. Riewe, one immediately recognizes that readilly from the beginning the author doesn't specify what kind of function  is the Lagrangian (smoothness conditions or other sort of things) nor the space of functions in which he wants to find a solution to the minimization problem (cf. \cite[Section III-C]{Riewe}). We may say that the Euler--Lagrange equation therein presented was derived formally.

The pioneering work of Riewe has attracted the interest of other scientists; we quote some of them here and refer the reader also to the references therein \cite{Agrawal,Almeida,Atanackovic,Bourdin,Cresson,Lazo,Malinowska}. While reading some of the (many) papers written in the subject we were led to some questions. As a prototype example, consider the following minimization problem (cf. the fractional calculus definitions, if needed, in Section \ref{sect2})
\vskip -12pt
\begin{equation}
    \label{Prob000}
    \mathcal{L}(y)=\int_0^1({^c}D^\alpha_{0^+}y(t))^2dt\rightarrow \min,\quad y(0)=0,\ y(1)=1.
\end{equation}
\vskip -2pt \noindent
With $\alpha=1$ the Caputo derivative is just the classical derivative $y'$. In many books on the calculus of variations topic, that is the first example used to show the usefulness of the Euler--Lagrange equation. In fact it is not so hard to show that the straight line $y(t)=t$ solves the problem \eqref{Prob000}. Note, in particular, that this solution is $C^{\infty}[0,1]$.

The problem \eqref{Prob000} was presented (except for a constant factor) as an example in \cite{Agrawal} (note that, since $y(0)$=0, the Riemann--Liouville and the Caputo derivatives coincide). We will dwell into the details later in this work (cf. Example \ref{ex22}), however, let us say for the moment that the solution of the Euler--Lagrange equation presented in \cite{Agrawal} was defined only for $\alpha>\frac{1}{2}$. That is somewhat counter-intuitive because a restriction arises in such a \emph{simple} problem. Moreover, in the space of admissible functions considered by the author in \cite{Agrawal}, we could actually show that the Euler--Lagrange equation does not have a solution unless $\alpha=1$ (again, this will be shown in Example \ref{ex22}). Immediately we recognized that one problem was precisely determining a suitable space of admissible functions in which \eqref{Prob000} will have a solution. In passage, we refer that such a space may be found in the paper \cite{Bourdin}. However we were not glad with such resolution because the restriction $\alpha>\frac{1}{2}$ still applies. Therefore, we went for a deeper study of problems such as \eqref{Prob000} and what we came to was that, if we looked at the Lagrangian a bit differently from the usual, we could obtain solutions to the Euler--Lagrange equation defined in a more \emph{friendlier} and recognizable space of functions, namely, the space for which the function and its Caputo fractional derivative are continuous on the entire interval $[a,b]$. Note that, when $\alpha=1$, we are just saying that $y$ and $y'$ are continuous on $[a,b]$, which is perfectly recognizable.

So, we propose the simple problem of the calculus of variations depending on the Caputo left derivative to be defined by ($0<\alpha\leq 1$)
\begin{align}
    \mathcal{L}(y)=\frac{1}{\Gamma(\alpha)}&\int_a^b(b-t)^{\alpha-1}L(t,y(t),{^c}D^\alpha_{a^+}y(t))dt\rightarrow \min,\label{prob09}\\
    & y(a)=y_a,\quad y(b)=y_b.\nonumber
\end{align}
Precise assumptions on the Lagrangian $L$, space of functions and so on will be given in Section \ref{sect3}. Incredibly, when we look at \eqref{prob09}, is the thought that this formulation is actually natural. Indeed, in the classical case one calculates the integral of a function $L$ that depends on a derivative $y'$ and, in the fractional case, we calculate the fractional integral of a function $L$ that depends on a fractional derivative ${^c}D^\alpha_{a^+}y$ (see \eqref{def1} below).

\vspace*{-4pt}

\begin{remark}
After the first draft of this paper was written, the author became aware of the work \cite{Ali}. In that paper the authors consider an optimal control problem in which the Lagrangian is defined analogously to the one in \eqref{prob09}. However, they do not provide any justification to do it so, therefore, we believe that this work will be useful for researchers working in this area, in particular the explanations provided in Section \ref{sect4}.
\end{remark}

This paper is organized as follows: in Section \ref{sect2} we provide to the reader some insights to the fractional calculus theory. In Section \ref{sect3} we rigorously derive the first order necessary condition for our working problem \eqref{prob09}. Finally, in Section \ref{sect4} we present two examples that will hopefully clarify the usefulness of this work.

\vspace*{-3pt}
\section{Fractional calculus}\label{sect2}

\setcounter{section}{2} \setcounter{equation}{0} 

In this section we make a brief introduction to the concepts and results we will use in this work. For a thorough study of the subject we refer the reader to the monographs \cite{Kilbas,samko}.

For $\alpha>0$ and an interval $[a,b]$, the Riemann--Liouville left and right fractional integrals of a function $f$ are defined by 
\vskip -10pt
\begin{equation}\label{def1}
I_{a^+}^\alpha f(t)=\frac{1}{\Gamma(\alpha)}\int_a^t(t-s)^{\alpha-1}f(s)ds,\quad I_{b^-}^\alpha f(t)=\frac{1}{\Gamma(\alpha)}\int_t^b(s-t)^{\alpha-1}f(s)ds,
\end{equation}
\vskip -3pt \noindent
respectively. We also put $I_{a^+}^0f=I_{b^-}^0f=f$.

The Riemann--Liouville left and right fractional derivatives of a function $f$ are defined by 
\vskip - 12pt
$$D^\alpha_{a^+}f(t)=\frac{d}{dt}I_{a^+}^{1-\alpha} f(t),\quad D^\alpha_{b^-}f(t)=-\frac{d}{dt}I_{b^-}^{1-\alpha} f(t),$$
\vskip -3pt \noindent
respectively.

Finally, the Caputo left and right fractional derivatives of a function $f$ are defined by 
\vskip -10pt
\begin{equation*}
{^c}D^\alpha_{a^+}f(t)=\frac{d}{dt}I_{a^+}^{1-\alpha} (f(s)-f(a))(t),\quad {^c}D^\alpha_{b^-}f(t)=-\frac{d}{dt}I_{b^-}^{1-\alpha} (f(s)-f(b))(t),
\end{equation*}
\vskip -3pt \noindent
respectively.

\vspace*{-4pt}

\begin{remark} 
We immediately conclude that the left Riemann\\--Liouville derivative and the left Caputo derivative coincide if $f(a)=0$. Evidently the same holds for the right derivatives provided $f(b)=0$.
\end{remark}

\vspace*{-3pt}

The following formulas will be used repeatedly.

\vspace*{-5pt}

\begin{proposition}\cite[cf. Property 2.1 p.71]{Kilbas}\label{prop111}
Let $\alpha,\beta>0$. Then,
$$I_{a^+}^\alpha (s-a)^{\beta-1}(t)=\frac{\Gamma(\beta)}{\Gamma(\beta+\alpha)}(t-a)^{\beta+\alpha-1},$$
$$D_{a^+}^\alpha (s-a)^{\beta-1}(t)=\frac{\Gamma(\beta)}{\Gamma(\beta-\alpha)}(t-a)^{\beta-\alpha-1},$$
$$I_{b^-}^\alpha (b-s)^{\beta-1}(t)=\frac{\Gamma(\beta)}{\Gamma(\beta+\alpha)}(b-t)^{\beta+\alpha-1},$$
$$D_{b^-}^\alpha (b-s)^{\beta-1}(t)=\frac{\Gamma(\beta)}{\Gamma(\beta-\alpha)}(b-t)^{\beta-\alpha-1},$$
\vskip -3pt \noindent
when they are defined.
\end{proposition}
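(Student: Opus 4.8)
The plan is to establish the first identity (the left fractional integral of a power) by direct computation, and then derive the other three from it. The integral formula carries the genuine content; the two derivative formulas follow by combining it with the definitions $D_{a^+}^\alpha f = \frac{d}{dt}I_{a^+}^{1-\alpha} f$ and $D_{b^-}^\alpha f = -\frac{d}{dt}I_{b^-}^{1-\alpha} f$ together with one elementary differentiation, while the two right-sided identities are mirror images of the left-sided ones, obtained either by the reflection $t\mapsto a+b-t$ or by repeating the computation with the obvious sign and endpoint changes.

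First I would substitute $f(s)=(s-a)^{\beta-1}$ into the definition \eqref{def1} of $I_{a^+}^\alpha$, so that
\[
I_{a^+}^\alpha (s-a)^{\beta-1}(t)=\frac{1}{\Gamma(\alpha)}\int_a^t(t-s)^{\alpha-1}(s-a)^{\beta-1}ds.
\]
The key step is the substitution $s=a+(t-a)u$, which maps $[a,t]$ onto $[0,1]$ and, after factoring out the powers of $(t-a)$, turns the integral into $(t-a)^{\alpha+\beta-1}\int_0^1 u^{\beta-1}(1-u)^{\alpha-1}\,du$ up to the prefactor $1/\Gamma(\alpha)$. Recognising the remaining integral as the Beta function $B(\beta,\alpha)=\Gamma(\beta)\Gamma(\alpha)/\Gamma(\beta+\alpha)$ and cancelling $\Gamma(\alpha)$ yields the stated expression. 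The hypotheses $\alpha,\beta>0$ are precisely what guarantee convergence of this Beta integral at both endpoints.

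For the left derivative formula I would invoke the integral identity just proved, but with $\alpha$ replaced by $1-\alpha$, obtaining $I_{a^+}^{1-\alpha}(s-a)^{\beta-1}(t)=\frac{\Gamma(\beta)}{\Gamma(\beta-\alpha+1)}(t-a)^{\beta-\alpha}$, and then differentiate in $t$. Using $\frac{d}{dt}(t-a)^{\beta-\alpha}=(\beta-\alpha)(t-a)^{\beta-\alpha-1}$ and the functional equation $(\beta-\alpha)\Gamma(\beta-\alpha)=\Gamma(\beta-\alpha+1)$ collapses the constant to $\Gamma(\beta)/\Gamma(\beta-\alpha)$, as required. The right-sided integral and derivative identities then follow verbatim from the same two computations after the substitution $s=b-(b-t)u$ and the use of the minus sign in the definition of $D_{b^-}^\alpha$.

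I expect the only real subtlety to be the book-keeping behind the phrase ``when they are defined''. The Beta-integral step needs $\alpha,\beta>0$; the derivative formulas additionally require $1-\alpha>0$ (so that $I_{a^+}^{1-\alpha}$ is a genuine fractional integral of positive order, consistent with the range $0<\alpha\le 1$ used throughout) and that none of the Gamma factors land on a pole, which is what makes the differentiated power well defined. Beyond tracking these conditions, the argument is a routine Beta-function evaluation followed by a single differentiation.
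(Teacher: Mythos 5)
Your proof is correct and is the standard Beta-function computation that underlies the cited Property 2.1 in Kilbas et al.; the paper itself offers no proof, only the citation, so there is nothing to diverge from. One tiny remark: in the degenerate case $\beta=\alpha$ the derivative formula does not fail but yields zero (your computation gives a constant whose derivative vanishes, matching the convention $1/\Gamma(0)=0$), which is exactly the identity $D_{b^-}^\alpha(b-s)^{\alpha-1}=0$ used later in the paper.
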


The following result may be found in \cite[Lemma 2.4 p. 74]{Kilbas}. 
As usual, we denote by $L_p[a,b]$ ($1\leq p\leq\infty$) the set of Lebesgue functions defined on $[a,b]$ for which $\|f\|_p<\infty$, where
\vskip -12pt
$$\|f\|_p=\left(\int_a^b|f(t)|^p dt\right)^{\frac{1}{p}},\quad 1\leq p<\infty,$$
\vskip -5pt \noindent
and
\vskip -12pt
$$\|f\|_{\infty}=\esssup_{t\in[a,b]}|f(t)|.$$

\begin{proposition}\label{prop1}
For $\alpha>0$ and $f\in L_1[a,b]$, the following equalities hold
\vskip -12pt
$$D^\alpha_{a^+}I^\alpha_{a^+}f(t)=f(t),\quad D^\alpha_{b^-}I^\alpha_{b^-}f(t)=f(t)\ \ \mbox{a.e.\ on} \ [a,b].$$
\end{proposition}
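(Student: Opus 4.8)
The plan is to prove the left-sided identity; the right-sided one then follows at once by the reflection $t \mapsto a+b-t$, which interchanges the left and right operators. Unfolding the definition of the Riemann--Liouville derivative gives
$$D^\alpha_{a^+} I^\alpha_{a^+} f(t) = \frac{d}{dt}\, I^{1-\alpha}_{a^+} I^\alpha_{a^+} f(t),$$
so everything reduces to understanding a composition of two fractional integrals. The central step is therefore the semigroup (additivity) law
$$I^{1-\alpha}_{a^+} I^\alpha_{a^+} f = I^{1}_{a^+} f,$$
after which the conclusion follows from the classical Lebesgue differentiation theorem: since $I^1_{a^+} f(t) = \int_a^t f(s)\,ds$ is absolutely continuous, its derivative equals $f(t)$ for a.e.\ $t\in[a,b]$, yielding the claimed a.e.\ identity.

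To establish the semigroup law (for $0<\alpha<1$; the case $\alpha=1$ is immediate since then $I^{1-\alpha}_{a^+}$ is the identity) I would write $I^{1-\alpha}_{a^+} I^\alpha_{a^+} f(t)$ as an iterated integral over the triangle $a\le\tau\le s\le t$ with kernel $(t-s)^{-\alpha}(s-\tau)^{\alpha-1}$ and exchange the order of integration by Fubini's theorem, integrating in $s$ first. The inner integral $\int_\tau^t (t-s)^{-\alpha}(s-\tau)^{\alpha-1}\,ds$ is evaluated by the substitution $s=\tau+(t-\tau)u$, which converts it into the Beta integral $\int_0^1 (1-u)^{-\alpha}u^{\alpha-1}\,du = \Gamma(\alpha)\Gamma(1-\alpha)$, independent of $\tau$ and $t$. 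Cancelling this against the normalizing factors $\tfrac{1}{\Gamma(\alpha)}$ and $\tfrac{1}{\Gamma(1-\alpha)}$ collapses the whole expression to $\int_a^t f(\tau)\,d\tau = I^1_{a^+} f(t)$.

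The main obstacle is the rigorous justification of the interchange of the order of integration for a merely $L_1$ integrand, rather than the Beta-function computation, which is routine; the product of the two kernels is integrable over the triangle, so the hypotheses of Fubini's theorem are met, after which the argument is mechanical. I would emphasize that only an a.e.\ statement can be expected, precisely because the concluding differentiation of an absolutely continuous function recovers the integrand only almost everywhere. Finally, for $\alpha>1$ the same scheme applies verbatim once the single derivative is replaced by $\frac{d^n}{dt^n}$ with $n=\lceil\alpha\rceil$ and the exponent $1-\alpha$ by $n-\alpha$, using $I^{n-\alpha}_{a^+} I^\alpha_{a^+} f = I^n_{a^+} f$ together with the fact that $n$-fold differentiation of $I^n_{a^+}f$ returns $f$ almost everywhere.
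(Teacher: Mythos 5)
Your argument is correct, and it is essentially the standard proof of this identity: the paper itself offers no proof, quoting the result directly from Kilbas--Srivastava--Trujillo (Lemma 2.4, p.~74), and your route --- the semigroup law $I^{1-\alpha}_{a^+}I^{\alpha}_{a^+}f=I^{1}_{a^+}f$ via Tonelli/Fubini and the Beta integral, followed by Lebesgue differentiation of the absolutely continuous function $\int_a^t f$ --- is precisely the argument given in that reference. You correctly identify the only point requiring care (justifying the interchange of integration order for $f\in L_1$, which Tonelli handles since the inner Beta integral is a finite constant independent of $\tau$ and $t$), and your reduction of the right-sided case by the reflection $t\mapsto a+b-t$ and of the case $\alpha>1$ to $n$-fold differentiation of $I^{n}_{a^+}f$ are both sound.
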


\vspace*{-5pt}

\begin{remark}\label{remrem}
It can be shown that, if $f$ is continuous, then \\${^c}D^\alpha_{a^+}I^\alpha_{a^+}f(t)=f(t)$ on $[a,b]$ (cf. \cite[(2.4.32) p. 95]{Kilbas}).
\end{remark}

\vspace*{-5pt}

\begin{proposition}(cf. \cite[(2.4.42) p. 96]{Kilbas})\label{prop69}
If $f, {^c}D^\alpha_{a^+}f\in C[a,b]$, then $I^\alpha_{a^+}{^c}D^\alpha_{a^+}f(t)=f(t)-f(a)$ on $[a,b]$.
\end{proposition}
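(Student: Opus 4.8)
The plan is to prove the identity by a \emph{uniqueness} argument rather than by a direct computation: I would show that the right-hand side $f-f(a)$ and the candidate $u:=I^\alpha_{a^+}{^c}D^\alpha_{a^+}f$ are two continuous functions on $[a,b]$ that agree at the left endpoint and have the same Caputo derivative, and then argue that such functions must coincide. Writing $h:={^c}D^\alpha_{a^+}f$, which is continuous by hypothesis, the first task is to record two elementary facts about $u=I^\alpha_{a^+}h$. From the crude estimate $|I^\alpha_{a^+}h(t)|\le \|h\|_\infty (t-a)^\alpha/\Gamma(\alpha+1)$ one gets that $u$ is continuous on $[a,b]$ with $u(a)=0$, so $u(a)$ matches the value $f(a)-f(a)$ of $f-f(a)$ at $a$.

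Next I would differentiate. Since $h$ is continuous, Remark~\ref{remrem} gives ${^c}D^\alpha_{a^+}u={^c}D^\alpha_{a^+}I^\alpha_{a^+}h=h$ on all of $[a,b]$. On the other side, the Caputo derivative is linear and, directly from its definition, annihilates additive constants, so ${^c}D^\alpha_{a^+}(f-f(a))={^c}D^\alpha_{a^+}f=h$ as well. Hence $\phi:=u-(f-f(a))$ is continuous, satisfies $\phi(a)=0$, and obeys ${^c}D^\alpha_{a^+}\phi\equiv 0$ on $[a,b]$.

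It therefore remains to show that a continuous $\phi$ with $\phi(a)=0$ and vanishing Caputo derivative is identically zero; this is the crux of the argument. Unfolding the definition, ${^c}D^\alpha_{a^+}\phi=\frac{d}{dt}I^{1-\alpha}_{a^+}\phi\equiv 0$ (the subtracted constant $\phi(a)$ is zero), so the continuous function $\Phi:=I^{1-\alpha}_{a^+}\phi$ has an ordinary derivative that vanishes \emph{everywhere} on $(a,b]$; by the mean value theorem $\Phi$ is constant, and the same endpoint estimate as above forces $\Phi(a)=0$, whence $\Phi\equiv 0$. Finally I would invert the integral $I^{1-\alpha}_{a^+}$ using Proposition~\ref{prop1} with order $1-\alpha$: since $\phi\in L_1[a,b]$ we get $\phi=D^{1-\alpha}_{a^+}I^{1-\alpha}_{a^+}\phi=D^{1-\alpha}_{a^+}\Phi=0$ almost everywhere, and continuity upgrades this to $\phi\equiv 0$, giving $u=f-f(a)$. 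The case $\alpha=1$ is separate but trivial, since then $I^0_{a^+}$ is the identity and $\Phi=\phi$. I expect the main obstacle to be precisely this uniqueness step, where the delicate points are that the vanishing of the ordinary derivative must hold everywhere (so that the mean value theorem, not merely a ``derivative zero a.e.'' argument, applies) and that the almost-everywhere identity from Proposition~\ref{prop1} must be promoted to an identity on all of $[a,b]$ through continuity; an alternative would be to bypass this by invoking the semigroup law $I^\alpha_{a^+}I^{1-\alpha}_{a^+}=I^1_{a^+}$ together with the fundamental theorem of calculus, at the cost of using a composition property not explicitly recorded above.
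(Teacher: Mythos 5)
Your argument is correct. Note first that the paper itself offers no proof of this proposition: it is quoted from Kilbas--Srivastava--Trujillo \cite[(2.4.42)]{Kilbas}, so there is no in-paper argument to match yours against, and what you have produced is a genuinely independent derivation. The usual textbook proof is computational: for $f$ regular enough one writes ${^c}D^\alpha_{a^+}f=I^{1-\alpha}_{a^+}f'$ and composes, $I^\alpha_{a^+}I^{1-\alpha}_{a^+}f'=I^1_{a^+}f'=f-f(a)$, which requires $f$ to be (at least) absolutely continuous and uses the semigroup law for fractional integrals. Your route instead treats the identity as a uniqueness statement: both sides are continuous, vanish at $a$, and have the same Caputo derivative $h={^c}D^\alpha_{a^+}f$ (the left-hand side by Remark \ref{remrem}, the right-hand side because the Caputo derivative annihilates constants), so their difference $\phi$ satisfies $\frac{d}{dt}I^{1-\alpha}_{a^+}\phi\equiv 0$; the mean value theorem makes $I^{1-\alpha}_{a^+}\phi$ constant, the endpoint estimate makes the constant zero, and Proposition \ref{prop1} with order $1-\alpha$ recovers $\phi=0$ a.e., hence everywhere by continuity. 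This buys you exactly the hypotheses as stated (only $f,{^c}D^\alpha_{a^+}f\in C[a,b]$, no absolute continuity assumed), at the cost of leaning on Remark \ref{remrem}, which the paper likewise only cites; there is no circularity, since ${^c}D^\alpha_{a^+}I^\alpha_{a^+}f=f$ is established in \cite{Kilbas} directly and not via the present proposition. The two delicate points you flag --- that $\frac{d}{dt}I^{1-\alpha}_{a^+}\phi$ vanishes at \emph{every} point of $(a,b)$, not merely almost everywhere, which is what licenses the mean value theorem, and the promotion of the a.e.\ identity of Proposition \ref{prop1} to a pointwise one by continuity --- are indeed where care is needed, and you handle both correctly. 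Your closing alternative via $I^\alpha_{a^+}I^{1-\alpha}_{a^+}\phi=I^1_{a^+}\phi$ and the fundamental theorem of calculus also works, but imports the semigroup property, which the paper never records.
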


To present the next results, we need to introduce two spaces of functions. Let $1\leq p\leq \infty$ and $\alpha>0$. We define 
\vskip -8pt
$$I_{a^+}^\alpha(L_p[a,b])=\{f:f=I_{a^+}^\alpha\phi,\ \phi\in L_p[a,b]\},$$
\vskip -4pt \noindent
and
\vskip -12pt
$$I_{b^-}^\alpha(L_p[a,b])=\{f:f=I_{b^-}^\alpha\phi,\ \phi\in L_p[a,b]\}.$$

vskip 2pt

The next formula is crucial to derive the Euler--Lagrange equation and may be consulted in \cite[Corollary 2 p. 46]{samko}.

\begin{proposition}[Fractional integration by parts]\label{intpartes}
Let $0<\alpha<1$. Suppose that $f\in I_{b^-}^\alpha(L_p[a,b])$ and $g\in I_{a^+}^\alpha(L_q[a,b])$ with $1\leq p\leq\infty$, $1\leq q\leq\infty$ are such that $p^{-1}+q^{-1}\leq 1+\alpha$ ($p>1$ and $q>1$ when $p^{-1}+q^{-1}= 1+\alpha$). Then,
\begin{equation}\label{by parts}
\int_a^bf(x)D_{a^+}^\alpha g(x)dx=\int_a^bg(x)D_{b^-}^\alpha f(x)dx.
\end{equation}
\end{proposition}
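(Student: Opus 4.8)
The plan is to exploit the very definition of the spaces $I_{b^-}^\alpha(L_p[a,b])$ and $I_{a^+}^\alpha(L_q[a,b])$ in order to reduce the claimed identity to an elementary statement about fractional integrals. By hypothesis there exist $\psi\in L_p[a,b]$ and $\phi\in L_q[a,b]$ with $f=I_{b^-}^\alpha\psi$ and $g=I_{a^+}^\alpha\phi$. Applying Proposition \ref{prop1} I would first record that $D_{a^+}^\alpha g=D_{a^+}^\alpha I_{a^+}^\alpha\phi=\phi$ and $D_{b^-}^\alpha f=D_{b^-}^\alpha I_{b^-}^\alpha\psi=\psi$, both holding almost everywhere on $[a,b]$. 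Substituting these into \eqref{by parts}, the formula to be proved collapses to
$$\int_a^b\phi(x)\,I_{b^-}^\alpha\psi(x)\,dx=\int_a^b\psi(x)\,I_{a^+}^\alpha\phi(x)\,dx,$$
that is, to an integration by parts formula at the level of the Riemann--Liouville integrals alone.

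Next I would establish this reduced identity by interchanging the order of integration. Writing out the left-hand side via the definition \eqref{def1} of $I_{b^-}^\alpha$ produces a double integral of $\phi(x)(s-x)^{\alpha-1}\psi(s)/\Gamma(\alpha)$ over the triangle $\{(x,s):a\le x\le s\le b\}$. Expanding the right-hand side via $I_{a^+}^\alpha$ yields exactly the same integrand over the same region, merely with the inner and outer variables exchanged. Hence, once Fubini's theorem is applicable, the two expressions are literally equal and the proof is complete.

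The hard part will be justifying the use of Fubini's theorem, and this is precisely where the arithmetic condition $p^{-1}+q^{-1}\le 1+\alpha$ enters. I would need the absolute convergence of the double integral, which by Tonelli's theorem reduces to showing that $\int_a^b|\phi(x)|\,I_{b^-}^\alpha|\psi|(x)\,dx<\infty$. By Hölder's inequality this is bounded by $\|\phi\|_q\,\|I_{b^-}^\alpha|\psi|\|_{q'}$, with $q'$ the conjugate exponent of $q$, so it suffices to know that $I_{b^-}^\alpha|\psi|\in L_{q'}[a,b]$. The Hardy--Littlewood theorem on fractional integration guarantees that $I_{b^-}^\alpha$ maps $L_p[a,b]$ into $L_r[a,b]$ with $r^{-1}=p^{-1}-\alpha$; since $[a,b]$ has finite measure, $L_r[a,b]$ embeds into $L_{q'}[a,b]$ exactly when $q'\le r$, which rearranges to $p^{-1}+q^{-1}\le 1+\alpha$. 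The endpoint case $p^{-1}+q^{-1}=1+\alpha$ with $p,q>1$ is the delicate one, as there a naive inclusion of $L_r$ spaces no longer suffices and one must invoke the limiting (strong-type) form of the Hardy--Littlewood--Sobolev estimate; this is the step I would treat with the most care.
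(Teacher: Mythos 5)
The paper offers no proof of this proposition: it is quoted verbatim from \cite[Corollary 2, p.~46]{samko}, and your argument is essentially the proof given there --- reduce via $f=I_{b^-}^\alpha\psi$, $g=I_{a^+}^\alpha\phi$ and Proposition \ref{prop1} to the Fubini identity $\int_a^b\phi\, I_{b^-}^\alpha\psi = \int_a^b\psi\, I_{a^+}^\alpha\phi$, with the exponent condition $p^{-1}+q^{-1}\le 1+\alpha$ entering exactly to justify absolute convergence through the Hardy--Littlewood mapping theorem. The only point to tighten is that the strong-type bound $I^\alpha:L_p\to L_r$, $r^{-1}=p^{-1}-\alpha$, requires $p>1$; when $p=1$ (or $q=1$) the hypothesis forces strict inequality, which gives $q'<1/(1-\alpha)$ and lets you conclude $I_{b^-}^\alpha|\psi|\in L_{q'}$ directly from Young's inequality, since the kernel $(s-x)^{\alpha-1}$ then lies in $L_{q'}$ locally --- whereas in the interior case $p^{-1}+q^{-1}=1+\alpha$ with $p,q>1$ the classical Hardy--Littlewood(--Sobolev) theorem already supplies the needed strong-type estimate.
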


\begin{remark}\label{rem10}
Obviously formula \eqref{by parts} still holds for $\alpha=1$ when $g(a)=0=g(b)$.
\end{remark}

\section{Problem formulation and main results}\label{sect3} 

\setcounter{section}{3} \setcounter{equation}{0} 

For $0<\alpha\leq 1$ consider the minimization problem
\begin{align}
    \mathcal{L}(y)=\frac{1}{\Gamma(\alpha)}&\int_a^b(b-t)^{\alpha-1}L(t,y(t),{^c}D^\alpha_{a^+}y(t))dt\rightarrow \min,\label{Prob}\\
    & y(a)=y_a,\ y(b)=y_b.\label{Prob2}
\end{align} 
We assume $L(t,u,v):[a,b]\times\mathbb{R}^2\rightarrow\mathbb{R}$ to be a continuous function with its partial derivatives $L_u$ and $L_v$ continuous. We will consider solutions to \eqref{Prob}--\eqref{Prob2} in the space of functions $\mathcal{F}=\{f:[a,b]\rightarrow\mathbb{R}\ s.t.\ f,{^c}D^\alpha_{a^+}f(t)\in C[a,b]\}$. The space of \emph{variations} is defined by $\mathcal{V}=\{f\in \mathcal{F}: f(a)=0=f(b)\}$.

We start by proving the fractional analogue of the classical  basic lemma of the  calculus of variations (see e.g. \cite[Section 1.4]{Mah}).

\vspace*{-2pt}

\begin{lemma}[Fractional du Bois-Reymond lemma]\label{B-R lemma}
Suppose that $f\in C[a,b]$. Then, for $0<\alpha\leq 1$
\vskip -10pt
\begin{equation}\label{eq0}
    \frac{1}{\Gamma(\alpha)}\int_a^b(b-s)^{\alpha-1}f(s){^c}D_{a^+}^\alpha\eta(s)ds=0,\quad\forall\eta\in\mathcal{V},
\end{equation}
if, and only if, $f(t)=k$ on $[a,b]$ for some constant $k\in\mathbb{R}$.
\end{lemma}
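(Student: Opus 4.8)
The plan is to mimic the classical du Bois-Reymond argument, replacing the antiderivative by the fractional integral $I_{a^+}^\alpha$ and the ordinary mean of $f$ by a suitably weighted mean. The sufficiency is immediate. First I would record the identity $\frac{1}{\Gamma(\alpha)}\int_a^b(b-s)^{\alpha-1}g(s)\,ds = I_{a^+}^\alpha g(b)$, valid for any integrable $g$ and obtained by reading off the definition \eqref{def1} at $t=b$. If $f\equiv k$, then the left-hand side of \eqref{eq0} equals $k\, I_{a^+}^\alpha {}^cD^\alpha_{a^+}\eta(b)$; since $\eta\in\mathcal{V}\subset\mathcal{F}$, Proposition \ref{prop69} gives $I_{a^+}^\alpha {}^cD^\alpha_{a^+}\eta(b)=\eta(b)-\eta(a)=0$, so the identity vanishes and sufficiency follows.

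For the necessity, the key is to manufacture a single admissible variation whose Caputo derivative reproduces $f$ up to a constant. I would set
$$k=\frac{\int_a^b(b-s)^{\alpha-1}f(s)\,ds}{\int_a^b(b-s)^{\alpha-1}\,ds},$$
the weight being integrable and strictly positive on $[a,b)$, so that $k$ is well defined, and then put $\eta:=I_{a^+}^\alpha(f-k)$. I then verify $\eta\in\mathcal{V}$: since $f-k\in C[a,b]$, the fractional integral $\eta$ is continuous, and Remark \ref{remrem} yields ${}^cD^\alpha_{a^+}\eta=f-k\in C[a,b]$, whence $\eta\in\mathcal{F}$; the boundary value $\eta(a)=I_{a^+}^\alpha(f-k)(a)=0$ is automatic, while the choice of $k$ is precisely what forces $\eta(b)=I_{a^+}^\alpha(f-k)(b)=0$ through the identity above. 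This construction is the heart of the argument and is where I expect the main obstacle to lie, because it relies on ${}^cD^\alpha_{a^+}I_{a^+}^\alpha$ acting as the identity \emph{pointwise} on continuous functions (Remark \ref{remrem}) rather than merely almost everywhere, and on the weighted mean being exactly the normalization that annihilates the right endpoint.

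With this $\eta$ in hand, I would substitute it into \eqref{eq0} to obtain $\frac{1}{\Gamma(\alpha)}\int_a^b(b-s)^{\alpha-1}f(s)(f(s)-k)\,ds=0$. Subtracting the relation $\frac{k}{\Gamma(\alpha)}\int_a^b(b-s)^{\alpha-1}(f(s)-k)\,ds=0$, which holds by the very definition of $k$, collapses this to
$$\frac{1}{\Gamma(\alpha)}\int_a^b(b-s)^{\alpha-1}\bigl(f(s)-k\bigr)^2\,ds=0.$$
Finally, since the weight $(b-s)^{\alpha-1}$ is strictly positive on $[a,b)$ and the integrand $(f-k)^2$ is nonnegative and continuous, the vanishing of this integral forces $f(s)=k$ on $[a,b)$, and continuity extends the equality to $s=b$, completing the proof.
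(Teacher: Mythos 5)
Your proposal is correct and follows essentially the same route as the paper: the same test variation $\eta=I_{a^+}^\alpha(f-k)$ with the same constant $k$ (your weighted mean equals the paper's $I_{a^+}^\alpha f(b)\,\Gamma(\alpha+1)/(b-a)^\alpha$), the same split of $f(f-k)$ into $(f-k)^2+k(f-k)$, and the same positivity argument at the end. The only differences are presentational; your explicit appeal to Remark \ref{remrem} to check $\eta\in\mathcal{V}$ is in fact slightly more careful than the paper's.
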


\vspace*{-3pt}

\begin{proof}
If $f(t)=k$ on $[a,b]$, then it follows from Proposition \ref{prop69} that
\vskip -8pt
$$\frac{1}{\Gamma(\alpha)}\int_a^b(b-s)^{\alpha-1}kD_{a^+}^\alpha\eta(s)ds=k(\eta(b)-\eta(b))=0.$$

Now, suppose that \eqref{eq0} holds, with $f\in C[a,b]$. Define
$$\eta(t)=I^\alpha_{a^+}f(t)-k\frac{(t-a)^\alpha}{\Gamma(\alpha+1)},\quad k=I^\alpha_{a^+}f(b)\frac{\Gamma(\alpha+1)}{(b-a)^\alpha}.$$
Since $\eta(a)=0=\eta(b)$ and $\eta,{^c}D_{a^+}^\alpha\eta\in C[a,b]$, then $\eta\in \mathcal{V}$. Moreover,
\vskip -10pt
\begin{multline*}
0=\frac{1}{\Gamma(\alpha)}\int_a^b(b-s)^{\alpha-1}f(s){^c}D_{a^+}^\alpha\eta(s)ds=\frac{1}{\Gamma(\alpha)}\int_a^b(b-s)^{\alpha-1}[f(s)-k]^2ds\\+\frac{k}{\Gamma(\alpha)}\int_a^b(b-s)^{\alpha-1}{^c}D_{a^+}^\alpha\eta(s)ds=\frac{1}{\Gamma(\alpha)}\int_a^b(b-s)^{\alpha-1}[f(s)-k]^2ds.
\end{multline*}
Therefore, $f(t)=k$ on $[a,b]$ and the proof is done.
\end{proof}

\vspace*{-4pt}

\begin{remark}
An attempt to prove a similar result to Lemma \ref{B-R lemma} was done in \cite[Lemma 3.2]{Lazo}. However the proof has inconsistencies. The authors assumed that the variations $\eta$ are differentiable on $[a,b]$. Within the proof they claim that the function $g(t)=I_{a^+}^\alpha(f(s)-K)(t),\ K\in\mathbb{R}$,
is differentiable on $[a,b]$ for a function $f\in L_1[a,b]$ such that there is a number $\varepsilon\in(a,b]$ with $|f(t)|\leq c(t-a)^{\beta}$ for all $t\in[a,\varepsilon]$, where $c>0$ and $\beta>-\alpha$ are constants. Well, we just need to define $f=C$ with $C>K$. Then (cf. Proposition \ref{prop111}), $g(t)=(C-K)\frac{(t-a)^\alpha}{\Gamma(\alpha+1)}$, which is not differentiable at $t=a$.
\end{remark}

\vspace*{-6pt}

\begin{theorem}\label{mainresult}
If $y\in\mathcal{F}$ is a solution of the minimization problem \eqref{Prob}--\eqref{Prob2}, then $y$ satisfies the equation
\begin{equation}\label{inteq}
   (b-t)^{1-\alpha}I_{b^-}^\alpha g(t)+L_v(t,y(t),{^c}D^\alpha_{a^+}y(t))=k\in\mathbb{R},\quad t\in[a,b],
\end{equation}
where $g(s)=(b-s)^{\alpha-1} L_u(s,y(s),{^c}D^\alpha_{a^+}y(s))$.
\end{theorem}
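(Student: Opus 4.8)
The plan is to follow the classical variational scheme adapted to the fractional setting: perturb the minimizer, set the first variation to zero, use fractional integration by parts to collect \emph{all} terms against the single test expression ${^c}D^\alpha_{a^+}\eta$, and then finish with the fractional du Bois-Reymond Lemma \ref{B-R lemma}. First I would fix an arbitrary $\eta\in\mathcal{V}$ and note that $y+\epsilon\eta\in\mathcal{F}$ for every $\epsilon\in\mathbb{R}$, since the Caputo derivative is linear, so ${^c}D^\alpha_{a^+}(y+\epsilon\eta)={^c}D^\alpha_{a^+}y+\epsilon\,{^c}D^\alpha_{a^+}\eta\in C[a,b]$, while the boundary conditions \eqref{Prob2} are preserved because $\eta(a)=\eta(b)=0$. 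Setting $\phi(\epsilon)=\mathcal{L}(y+\epsilon\eta)$, the function $\phi$ attains a minimum at $\epsilon=0$, hence $\phi'(0)=0$. Differentiating under the integral sign --- legitimate because $L_u,L_v$ are continuous, hence bounded along the compact set swept out by $(t,y+\epsilon\eta,{^c}D^\alpha_{a^+}y+\epsilon\,{^c}D^\alpha_{a^+}\eta)$ for $\epsilon$ near $0$, and the weight $(b-t)^{\alpha-1}$ is integrable --- gives
\[
0=\frac{1}{\Gamma(\alpha)}\int_a^b(b-t)^{\alpha-1}\bigl[L_u\,\eta(t)+L_v\,{^c}D^\alpha_{a^+}\eta(t)\bigr]dt,
\]
where $L_u,L_v$ are evaluated at $(t,y(t),{^c}D^\alpha_{a^+}y(t))$.

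The key manipulation is to rewrite the first term so that it too pairs against ${^c}D^\alpha_{a^+}\eta$. With $g(s)=(b-s)^{\alpha-1}L_u$ as in the statement, I would use $\eta(a)=0$ to identify $D^\alpha_{a^+}\eta={^c}D^\alpha_{a^+}\eta$ and, by Proposition \ref{prop69}, $\eta=I^\alpha_{a^+}{^c}D^\alpha_{a^+}\eta$, so that $\eta\in I^\alpha_{a^+}(L_\infty[a,b])$; moreover $g\in L_1[a,b]$, whence $I^\alpha_{b^-}g\in I^\alpha_{b^-}(L_1[a,b])$. Applying the fractional integration by parts \eqref{by parts} of Proposition \ref{intpartes} --- with the roles of its two functions played by $I^\alpha_{b^-}g$ and $\eta$, using $D^\alpha_{b^-}I^\alpha_{b^-}g=g$ (Proposition \ref{prop1}) and Remark \ref{rem10} to cover $\alpha=1$ (the admissibility condition $p^{-1}+q^{-1}=1<1+\alpha$ holds with $p=1$, $q=\infty$) --- yields $\int_a^b g\,\eta\,dt=\int_a^b (I^\alpha_{b^-}g)\,{^c}D^\alpha_{a^+}\eta\,dt$. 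Substituting back and factoring out the weight $(b-t)^{\alpha-1}$ turns the necessary condition into
\[
0=\frac{1}{\Gamma(\alpha)}\int_a^b(b-t)^{\alpha-1}\Bigl[(b-t)^{1-\alpha}I^\alpha_{b^-}g(t)+L_v\Bigr]{^c}D^\alpha_{a^+}\eta(t)\,dt,\quad\forall\eta\in\mathcal{V}.
\]

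Finally I would apply Lemma \ref{B-R lemma} to the bracketed function $F(t)=(b-t)^{1-\alpha}I^\alpha_{b^-}g(t)+L_v$, which immediately gives \eqref{inteq} with $F\equiv k$. The step I expect to be the real obstacle is verifying the hypothesis $F\in C[a,b]$ demanded by Lemma \ref{B-R lemma}, and in particular at the endpoint $t=b$, where the factor $(b-t)^{1-\alpha}\to0$ must compensate a possible blow-up of $I^\alpha_{b^-}g(t)$. I would resolve this by the change of variables $s=t+(b-t)u$, which recasts the product $(s-t)^{\alpha-1}(b-s)^{\alpha-1}$ as a Beta kernel and gives
\[
(b-t)^{1-\alpha}I^\alpha_{b^-}g(t)=\frac{(b-t)^{\alpha}}{\Gamma(\alpha)}\int_0^1 u^{\alpha-1}(1-u)^{\alpha-1}h\bigl(t+(b-t)u\bigr)\,du,
\]
with $h(s)=L_u(s,y(s),{^c}D^\alpha_{a^+}y(s))$ continuous and bounded on $[a,b]$. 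Dominated convergence then shows the integral is continuous in $t$, the prefactor $(b-t)^{\alpha}$ is continuous and vanishes at $b$, and $L_v$ is continuous, so $F\in C[a,b]$, and the lemma applies to conclude the proof.
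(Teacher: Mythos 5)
Your argument is correct and follows the same skeleton as the paper's proof: compute the first variation, convert the $L_u$-term via fractional integration by parts (with $f=I^\alpha_{b^-}g\in I^\alpha_{b^-}(L_1)$, $\eta\in I^\alpha_{a^+}(L_\infty)$, so $p^{-1}+q^{-1}=1<1+\alpha$, and Remark \ref{rem10} for $\alpha=1$), factor out the weight $(b-t)^{\alpha-1}$, and invoke Lemma \ref{B-R lemma}. You also correctly identify the real technical content, namely that $F(t)=(b-t)^{1-\alpha}I^\alpha_{b^-}g(t)+L_v$ must be shown continuous on all of $[a,b]$ before the lemma applies. Where you genuinely depart from the paper is in how you prove that continuity: the paper does it by hand, first bounding $|(b-s)^{1-\alpha}I^\alpha_{b^-}g(s)|\leq M(b-s)^\alpha/\Gamma(2\alpha)$ to handle $s=b$ and then running a fairly long direct estimate (splitting the integrals over $[c,s]$ and $[s,b]$ and comparing kernels) to get continuity at interior points. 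Your substitution $s=t+(b-t)u$, which rewrites the quantity as $\frac{(b-t)^\alpha}{\Gamma(\alpha)}\int_0^1u^{\alpha-1}(1-u)^{\alpha-1}h(t+(b-t)u)\,du$ with the fixed integrable majorant $Mu^{\alpha-1}(1-u)^{\alpha-1}$, handles the whole interval, including the endpoint $t=b$, in one stroke via dominated convergence; it is shorter and cleaner than the paper's computation and recovers the same endpoint decay $O((b-t)^\alpha)$. Both routes use only the continuity and boundedness of $h=L_u(\cdot,y,{}^cD^\alpha_{a^+}y)$, so nothing is lost in generality.
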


\vspace*{-4pt}

\begin{proof}
Let $0<\alpha\leq 1$. Under our hypothesis on $L$, $L_u$, $L_v$ and taking into account the spaces $\mathcal{F}$ and $\mathcal{V}$ we may conclude that, for a solution $y\in\mathcal{F}$ of \eqref{Prob}--\eqref{Prob2} and any (fixed) variation $\eta\in\mathcal{V}$, the first variation $\frac{d}{d\varepsilon}\mathcal{L}(y+\varepsilon\eta)$ equals zero at $\varepsilon=0$. Standard calculations then show that (note that ${^c}D^\alpha_{a^+}\eta(s)=D^\alpha_{a^+}\eta(s)$ since $\eta(a)=0$),
\vskip-14pt
\begin{multline*}
   \frac{1}{\Gamma(\alpha)} \int_a^b(b-s)^{\alpha-1}[L_u(s,y(s),{^c}D^\alpha_{a^+}y(s))\eta(s)\\
   +L_v(s,y(s),{^c}D^\alpha_{a^+}y(s))D^\alpha_{a^+}\eta(s)]ds=0.
\end{multline*}
\vskip -2pt \noindent
Now, since $\eta\in I_{a^+}^\alpha(L_\infty[a,b])$ (by Proposition \ref{prop69}) and $I_{b^-}^\alpha g\in I_{b^-}^\alpha(L_1[a,b])$, we conclude by Proposition \ref{intpartes} (and Remark \ref{rem10} if $\alpha=1$) that,
\vskip -12pt
$$\frac{1}{\Gamma(\alpha)}\int_a^b[I_{b^-}^\alpha g(s)D^\alpha_{a^+}\eta(s)+(b-s)^{\alpha-1}L_v(s,y(s),{^c}D^\alpha_{a^+}y(s))D^\alpha_{a^+}\eta(s)]ds=0,$$
\vskip -2pt \noindent
which is equivalent to
\vskip -13pt
$$\frac{1}{\Gamma(\alpha)}\int_a^b(b-s)^{\alpha-1}[(b-s)^{1-\alpha}I_{b^-}^\alpha g(s)+L_v(s,y(s),{^c}D^\alpha_{a^+}y(s))]D^\alpha_{a^+}\eta(s)ds=0.$$
\vskip -3pt \noindent
Now, we define the function $f(s)=(b-s)^{1-\alpha}I_{b^-}^\alpha g(s)+L_v(s,y(s),{^c}D^\alpha_{a^+}y(s))$ and we will show that $f\in C[a,b]$. Then, the equality \eqref{inteq} follows from Lemma \ref{B-R lemma}.

\vskip 2pt

The function $L_v(s,y(s),{^c}D^\alpha_{a^+}y(s))$ is obviously continuous on $[a,b]$, so we are left to show that the function $(b-s)^{1-\alpha}I^\alpha_{b^-}g(s)$ is also continuous on the entire interval. We start to prove the assertion for $s=b$:
We denote by $h(t)=L_u(t,y(t),{^c}D^\alpha_{a^+}y(t))$ and $M=\max_{t\in[a,b]}|h(t)|$.
We have,
\vskip -14pt
\begin{multline*}
    |(b-s)^{1-\alpha}I^\alpha_{b^-}g(s)|=\left|(b-s)^{1-\alpha}\frac{1}{\Gamma(\alpha)}\int_s^b(t-s)^{\alpha-1}(b-t)^{\alpha-1} h(t)dt\right|\\
    \leq M\frac{(b-s)^{1-\alpha}}{\Gamma(\alpha)}\int_s^b(t-s)^{\alpha-1}(b-t)^{\alpha-1}dt
    =M\frac{(b-s)^{1-\alpha}}{\Gamma(2\alpha)}(b-s)^{2\alpha-1},\ s<b,\\
    =M\frac{(b-s)^{\alpha}}{\Gamma(2\alpha)},\quad s\leq b.
\end{multline*}
\vskip -4pt \noindent
We now consider $a\leq c<b$. We want to show that
\vskip - 10pt
\begin{equation}\label{cont}
\lim_{s\rightarrow c}(b-s)^{1-\alpha}I^\alpha_{b^-}g(s)=(b-c)^{1-\alpha}I^\alpha_{b^-}g(c).
\end{equation}
\vskip -4pt \noindent
We have,
\vskip-14pt
\begin{multline*}
    |(b-s)^{1-\alpha}I^\alpha_{b^-}g(s)-(b-c)^{1-\alpha}I^\alpha_{b^-}g(c)|\\
    =\frac{1}{\Gamma(\alpha)}\left|(b-s)^{1-\alpha}\int_s^b(t-s)^{\alpha-1}(b-t)^{\alpha-1} h(t)dt\right.\\
    \left.-\, (b-c)^{1-\alpha}\int_c^b(t-c)^{\alpha-1}(b-t)^{\alpha-1} h(t)dt\right|.\\
\end{multline*}
\vskip -6pt \noindent
Now we assume that $b>s>c$, being the proof of the case $s<c$ analogous (of course that when $c=a$, then $s>a$). We have,
\vskip -14pt
   \begin{multline*}
 \frac{1}{\Gamma(\alpha)}\left|(b-s)^{1-\alpha}\int_s^b(t-s)^{\alpha-1}(b-t)^{\alpha-1} h(t)dt\right.\\
  \left.-(b-c)^{1-\alpha}\int_c^b(t-c)^{\alpha-1}(b-t)^{\alpha-1} h(t)dt\right|\\
    =\frac{1}{\Gamma(\alpha)}\left|\int_s^b[(b-s)^{1-\alpha}(t-s)^{\alpha-1}-(b-c)^{1-\alpha}(t-c)^{\alpha-1}](b-t)^{\alpha-1} h(t)dt\right.\\
    \left.-(b-c)^{1-\alpha}\int_c^s(t-c)^{\alpha-1}(b-t)^{\alpha-1} h(t)dt\right|=A.
\end{multline*}
\vskip -4pt \noindent
It is not difficult to see that
\vskip -11pt
$$(b-s)^{1-\alpha}(t-s)^{\alpha-1}-(b-c)^{1-\alpha}(t-c)^{\alpha-1}\geq 0\Leftrightarrow t\leq b,$$
\vskip -3pt \noindent
therefore,
\vskip -12pt
\begin{multline*}
  A\leq \frac{M}{\Gamma(\alpha)}\left[\int_s^b[(b-s)^{1-\alpha}(t-s)^{\alpha-1}-(b-c)^{1-\alpha}(t-c)^{\alpha-1}](b-t)^{\alpha-1}dt\right.\\
    \left.+\, (b-c)^{1-\alpha}\int_c^s(t-c)^{\alpha-1}(b-t)^{\alpha-1}dt\right]\\
    =\frac{M}{\Gamma(\alpha)}\left[(b-s)^{1-\alpha}\int_s^b(t-s)^{\alpha-1}(b-t)^{\alpha-1}dt\right.\\
   \left. -\,(b-c)^{1-\alpha}\int_c^b(t-c)^{\alpha-1}(b-t)^{\alpha-1}dt\right.\\
    \left.+2\, (b-c)^{1-\alpha}\int_c^s(t-c)^{\alpha-1}(b-t)^{\alpha-1}dt\right]\\
    \leq\frac{M}{\Gamma(\alpha)}\left[\frac{\Gamma(\alpha)}{\Gamma(2\alpha)}(b-s)^{\alpha}-\frac{\Gamma(\alpha)}{\Gamma(2\alpha)}(b-c)^{\alpha}\right.\\
    \left.+\, 2(b-c)^{1-\alpha}(b-s)^{\alpha-1} \frac{(s-c)^\alpha}{\alpha}\right]
    \end{multline*}
    \vskip -3pt \noindent
    which proves \eqref{cont}.
\end{proof} 

\begin{corollary}[Euler--Lagrange equation]
Suppose that $0<\alpha<1$. Then, under the conditions of Theorem \ref{mainresult}, $y$ satisfies the following equation 
\vskip -10pt
\begin{equation}\label{E-L}
    (b-t)^{\alpha-1}L_u(t,y(t),{^c}D^\alpha_{a^+}y(t))+D^\alpha_{b^-}h(t)=0,\quad t\in[a,b),
\end{equation}
\vskip -1pt \noindent
where $h(t)=(b-t)^{\alpha-1}L_v(t,y(t),{^c}D^\alpha_{a^+}y(t))$.
\end{corollary}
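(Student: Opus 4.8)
The plan is to pass from the integral identity \eqref{inteq} of Theorem \ref{mainresult} to its differential form by applying the right Riemann--Liouville derivative $D^\alpha_{b^-}$ to both sides. The argument is short: after a harmless rescaling, every term in \eqref{inteq} is disposed of by one of Proposition \ref{prop1} or Proposition \ref{prop111}.

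First I would rewrite \eqref{inteq}. Multiplying the identity by $(b-t)^{\alpha-1}$ gives, for $t\in[a,b)$,
$$I_{b^-}^\alpha g(t)+h(t)=k\,(b-t)^{\alpha-1},$$
where $h(t)=(b-t)^{\alpha-1}L_v(t,y(t),{}^cD^\alpha_{a^+}y(t))$ is exactly the function in the statement and $g(s)=(b-s)^{\alpha-1}L_u(s,y(s),{}^cD^\alpha_{a^+}y(s))$ is as in Theorem \ref{mainresult}. Since $L_u$ is continuous, hence bounded by some $M$ on $[a,b]$, we have $|g(s)|\le M(b-s)^{\alpha-1}$, so $g\in L_1[a,b]$ because $\alpha>0$; this is the integrability required to apply Proposition \ref{prop1}.

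Next I would apply $D^\alpha_{b^-}=-\frac{d}{dt}I^{1-\alpha}_{b^-}$ to both sides, using its linearity. For the left-hand side, Proposition \ref{prop1} gives $D^\alpha_{b^-}I^\alpha_{b^-}g(t)=g(t)$ almost everywhere. For the right-hand side, the third identity of Proposition \ref{prop111}, applied with integration order $1-\alpha$ and $\beta=\alpha$, yields $I^{1-\alpha}_{b^-}(b-s)^{\alpha-1}(t)=\frac{\Gamma(\alpha)}{\Gamma(1)}(b-t)^0=\Gamma(\alpha)$, a constant; hence $D^\alpha_{b^-}(b-t)^{\alpha-1}=-\frac{d}{dt}\Gamma(\alpha)=0$. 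Combining the two computations we obtain $g(t)+D^\alpha_{b^-}h(t)=0$, i.e.
$$(b-t)^{\alpha-1}L_u(t,y(t),{}^cD^\alpha_{a^+}y(t))+D^\alpha_{b^-}h(t)=0,$$
which is \eqref{E-L}.

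The step that deserves most care is the sense in which the identity holds, together with the exclusion of the endpoint $t=b$. Proposition \ref{prop1} only guarantees $D^\alpha_{b^-}I^\alpha_{b^-}g=g$ almost everywhere, so a priori \eqref{E-L} is an almost-everywhere identity. To upgrade it to the pointwise statement on $[a,b)$ claimed in the corollary, I would use that $g$ is continuous on $[a,b)$: writing $I^{1-\alpha}_{b^-}h(t)=k\Gamma(\alpha)-I^{1-\alpha}_{b^-}I^{\alpha}_{b^-}g(t)=k\Gamma(\alpha)-\int_t^b g(s)\,ds$ by the standard composition rule for the right fractional integrals, the fundamental theorem of calculus gives $D^\alpha_{b^-}h(t)=-g(t)$ at every point of continuity of $g$, that is, on all of $[a,b)$. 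The endpoint $t=b$ is necessarily excluded because the factor $(b-t)^{\alpha-1}$ blows up there for $\alpha<1$, which is also why the corollary is restricted to $0<\alpha<1$ and to the half-open interval $[a,b)$. I expect this endpoint and pointwise bookkeeping, rather than the derivative computation itself, to be the only delicate point.
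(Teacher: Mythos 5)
Your proposal is correct and follows essentially the same route as the paper: multiply \eqref{inteq} by $(b-t)^{\alpha-1}$, note $g\in L_1[a,b]$, apply $D^\alpha_{b^-}$ using Proposition \ref{prop1} together with $D^\alpha_{b^-}(b-t)^{\alpha-1}=0$. Your extra step upgrading the almost-everywhere identity to a pointwise one on $[a,b)$ via the semigroup property and continuity of $g$ is a welcome refinement of a detail the paper leaves implicit, but it is not a different argument.
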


\begin{proof}
Since $y$ satisfies \eqref{inteq}, we have that
\vskip -12pt
$$I_{b^-}^\alpha g(t)+(b-t)^{\alpha-1}L_v(t,y(t),{^c}D^\alpha_{a^+}y(t))=k(b-t)^{\alpha-1},\quad t\in[a,b).$$
\vskip -3pt \noindent
Since $g\in L_1[a,b]$ and is not defined only at $t=b$, and $D^\alpha_{b^-}(b-t)^{\alpha-1}=0$ 
(cf. \cite[(2.1.21) p. 71]{Kilbas}), we conclude, using Proposition \ref{prop1}, that \eqref{E-L} holds.
\end{proof}

\vspace*{-5pt}

\begin{remark}
Note that when $\alpha=1$, we get from Theorem \ref{mainresult} the classical Euler--Lagrange equation:
\vskip -12pt
$$L_u(t,y(t),y'(t))-\frac{d}{dt}L_v(t,y(t),y'(t))=0,\quad t\in[a,b].$$
\vskip -3pt \noindent
\end{remark}

\section{Examples}\label{sect4} 

\setcounter{section}{4} \setcounter{equation}{0} 

We start by solving the prototype example we mentioned before within the text.

\vspace*{-4pt}

\begin{example}
For $0<\alpha\leq 1$ consider the problem of finding $y\in\mathcal{F}$ such that
\vskip -13pt
\begin{equation}\label{ex1}
    \mathcal{L}(y)=\frac{1}{\Gamma(\alpha)}\int_0^1(1-t)^{\alpha-1}({^c}D^\alpha_{0^+}y(t))^2dt\rightarrow \min,\quad y(0)=0,\ y(1)=1.
\end{equation}
Before we start by solving this problem we would like to emphasize that it could not be solved, to the best of our knowledge, by any known result in the literature. The reason is that the function $L(t,u,v)=(1-t)^{\alpha-1}v^2,\ \alpha<1$ is not continuous at $t=1$.

By Theorem \ref{mainresult} we know that a candidate $y\in\mathcal{F}$ must satisfy the equation
\vskip -14pt
$$2{^c}D^\alpha_{0^+}y(t)=k,\quad k\in\mathbb{R}.$$
Applying the operator $I_{0^+}^\alpha$ to both sides of the previous equality we obtain (cf. Proposition \ref{prop111} and Proposition \ref{prop69})
$$y(t)=k_1t^{\alpha}+k_2,\quad k_1,\ k_2\in\mathbb{R}.$$
Now with the boundary conditions we determine $k_1,\ k_2$ to finally get $y(t)=t^\alpha$ (when $\alpha=1$ this function is the straight line, in accordance to the classical case). Note that, indeed, $y\in\mathcal{F}$ (but $y$ in not differentiable at $t=0$, assumption that the reader may find in many works on the subject). Moreover, there is no restriction in the parameter $\alpha$. Now, to show that $y(t)=t^\alpha$ actually solves \eqref{ex1}, we use the fact that the function $L(v)=v^2$ satisfies the following inequality:
\vskip -11pt
$$L(x)-L(y)\geq 2y(x-y),\quad\forall x,y\in\mathbb{R}.$$
Then, for $y(t)=t^\alpha$ and $x\in\mathcal{F}$, we get
\begin{align*}
& \mathcal{L}(x)-\mathcal{L}(y)=\frac{1}{\Gamma(\alpha)}\int_0^1(1-t)^{\alpha-1}[({^c}D^\alpha_{0^+}x(t))^2-({^c}D^\alpha_{0^+}y(t))^2]dt\\
& \geq\frac{1}{\Gamma(\alpha)}\int_0^1(1-t)^{\alpha-1}2({^c}D^\alpha_{a^+}y(t))[{^c}D^\alpha_{0^+}x(t)-{^c}D^\alpha_{0^+}y(t)]dt\\[4pt] 
& =2kI_{0^+}^\alpha{^c}D^\alpha_{0^+}(x-y)(t)
=2k[x(1)-y(1)-(x(0)-y(0))] 
=0,
\end{align*}
where we have used Proposition \ref{prop69}. In conclusion, $y(t)=t^\alpha$ is a minimum of $\mathcal{L}$ given \eqref{ex1} subject to $y(0)=0$ and $y(1)=1$.
\end{example}

\begin{example}\label{ex22}
For $0<\alpha\leq 1$ consider the problem of finding $y\in\mathcal{F}$ such that
\vskip -12pt
\begin{equation}\label{ex2}
    \mathcal{L}(y)=\int_0^1({^c}D^\alpha_{0^+}y(t))^2dt\rightarrow \min,\quad y(0)=0,\ y(1)=1.
\end{equation}
First note that
$$\int_0^1({^c}D^\alpha_{0^+}y(t))^2dt=\frac{1}{\Gamma(\alpha)}\int_0^1(1-t)^{\alpha-1}\Gamma(\alpha)(1-t)^{1-\alpha}({^c}D^\alpha_{0^+}y(t))^2dt.$$
Therefore, $L(t,u,v)=\Gamma(\alpha)(1-t)^{1-\alpha}v^2$ is continuous as well as $L_v$. Hence, Theorem \ref{mainresult} tells us that the candidates for solving \eqref{ex2} should be found among the solutions of the differential equation
$$\Gamma(\alpha)(1-t)^{1-\alpha}\cdot{^c}D^\alpha_{0^+}y(t)=k,\quad k\in\mathbb{R}.$$
Letting $t=1$ in the previous equality (and recalling that ${^c}D^\alpha_{0^+}y(t)$ is assumed to be continuous on $[0,1]$) we immediately conclude that $k=0$ (except if $\alpha=1$). Therefore, in virtue of the boundary conditions, $y=0$ on $[0,1]$. We may conclude that the problem \eqref{ex2} does not have solutions in the space $\mathcal{F}$ for $\alpha<1$.

We emphasize that the solution of the Euler--Lagrange equation found in \cite[formula (42)]{Agrawal} for this problem (actually the problem therein considered used the Riemann--Liouville fractional derivative; however since $y(0)=0$ it coincides with the Caputo one) apart from a constant factor was, for $\frac{1}{2}<\alpha\leq 1$,
\vskip -13pt
$$y(t)=\int_0^t(t-s)^{\alpha-1}(1-s)^{\alpha-1}ds,$$
\vskip -4pt \noindent
for which
\vskip - 11pt
$$D^\alpha_{0^+}y(t)=(1-t)^{\alpha-1},$$
is not continuous on $[0,1]$, though the author assumed this in \cite[Theorem 1]{Agrawal}. The interested reader may find in the work of Bourdin \cite{Bourdin} other spaces of functions, rather than $\mathcal{F}$, where \eqref{ex2} has a solution, though with the restriction $\frac{1}{2}<\alpha\leq 1$.
\end{example}

In a way, these two examples indicate that the fractional generalization of the classical problem given by
\begin{equation*}
    \mathcal{L}(y)=\int_0^1(y'(t))^2dt\rightarrow \min,\quad y(0)=0,\ y(1)=1 
\end{equation*}
should be the one in Example \ref{ex1} rather than the one in Example \ref{ex2}. 
Nevertheless, we think that only the possible applications of a concrete problem to some physical phenomena will ultimately indicate the right formulation.

\vspace*{-3pt}

\section*{Acknowledgements}

The author was supported by the ``Funda\c{c}\~ao para a Ci\^encia e a Tecnologia (FCT)" through the program ``Stimulus of Scientific Employment, Individual Support--2017 Call" with reference CEECIND/00640/2017.


\end{document}